%#! process-tex-acroread hyperbolic-gauss-20120522
%   F. Martin, M. Umehara and K. Yamada
%%%%%%%%%%%%%%%%%%%%%%%%%%%%%%%%%%%%%%%%%%%%%%%%%%%%%
\documentclass[leqno,draft]{amsart}
\def\today{May 22, 2012}
%%%%%%%%%%%%%%%%%%%%%%%%%%%%%%%%%%%%%%%%%%%%%%%%%%%%%
\date{\today}
%%%%%%%%%%%%%%%%%%%%%%%%%%%%%%%%%%%%%%%%%%%%%%%%%%%%%
\usepackage[usenames]{color}
%\newcommand{\red}[1]{\textcolor{Blue}{#1}}
%\usepackage{showkeys}
%\usepackage{verbatim}
%%%%%%%%%%%%%%%%%%%%%%%%%%%%%%%%%%%%%%%%%%%%%%%%%%
\newcommand{\pmt}[1]{{\begin{pmatrix} #1  \end{pmatrix}}}

\renewcommand{\L}{\mathcal{L}}

\newcommand{\D}{\mathbb{D}}

\newcommand{\C}{\mathbb{C}}
\newcommand{\R}{\mathbb{R}}

\newcommand{\SL}{\operatorname{SL}}

\newcommand{\SU}{\operatorname{SU}}
\newcommand{\PSL}{\operatorname{PSL}}

\renewcommand{\Re}{\operatorname{Re}}

\newcommand{\trans}[1]{\vphantom{#1}^t#1}

%%%%%%%%%%%%%%%%%%%%%%%%%%%%%%%%%%%%%%%%%%%%%%%%%%

%%%%%%%%%%%%%%%%%%%%%%%%%%%%%%%%%%%%%%%%%%%%%%%%%%
\numberwithin{equation}{section}
\newtheorem{theorem}{Theorem}[section]

\newtheorem{lemma}[theorem]{Lemma}

\newtheorem{proposition}[theorem]{Proposition}
\newtheorem*{introtheorem}{Theorem}
\newtheorem*{mainlemma*}{Main Lemma}
\newtheorem*{keylemma*}{Key Lemma}

\theoremstyle{definition}
\newtheorem{remark}[theorem]{Remark}
\newtheorem*{remark*}{Remark}

\newtheorem*{question}{Question}
%\renewcommand{\theintrocorollary}{\Alph{introcorollary}}

%%%%%%%%%%%%%%%%%%%%%%%%%%%%%%%%%%%%%%%%%%%%
%% Equation Numbers
%%%%%%%%%%%%%%%%%%%%%%%%%%%%%%%%%%%%%%%%%%%%
%\renewcommand{\theequation}{\thesection.\arabic{equation}}
%\renewcommand{\theequation}{\arabic{equation}}
%\newcommand{\eqref}[1]{\mbox{\rm (\ref{#1})}}
%\@addtoreset{equation}{section}
%%%%%%%%%%%%%%%%%%%%%%%%%%%%%%%%%%%%%%%%%%%%

%%%%%%%%%%%%%%%%%%%%%%%%%%%%%%%%%%%%%%%%%%%%%%%%%%
\title[
Flat surfaces whose hyperbolic Gauss maps
are bounded]{%
Flat surfaces  
in hyperbolic 3-space
whose hyperbolic Gauss maps
are bounded}%
\author[F. Martin]{Francisco Mart\'\i{}n}
\address[Martin]{
  Departamento de Geometr\'\i{}a y Topolog\'\i{}a,
  Universidad de Granada,
  18071 Granada, Spain.
}
\email{fmartin@ugr.es}
\author[M. Umehara]{Masaaki Umehara}
\address[Umehara]{%
   Department of Mathematical and Computing Sciences,
   Tokyo Institute of Technology
   2-12-1-W8-34, O-okayama, Meguro-ku,
   Tokyo 152-8552, Japan.
}
\email{umehara@is.titech.ac.jp}
\author[K. Yamada]{Kotaro Yamada}
\address[Yamada]{%
   Department of Mathematics,
   Tokyo Institute of Technology
   2-12-1-H-7, O-okayama, Meguro-ku,
   Tokyo 152-8551, Japan.
}
\email{kotaro@math.titech.ac.jp}
\thanks{
  The first author is partially
  supported by MEC-FEDER Grants No. MTM2007 - 61775 and No. MTM2011-22547,  and a Regional
  J. Andaluc\'\i a Grant no. P09-FQM-5088.
  The second and the third authors  are 
  partially supported by Grant-in-Aid for 
  Scientific Research (A) No.~22244006, 
  and Scientific Research (B) No.~21340016,
  respectively, from the Japan Society for the Promotion of Science.
}

\subjclass[2010]{%
  Primary 53D10; %Contact manifolds
  Secondary 53A10, % Minimal Submanifolds
            %53A15, % Affine Differential Geometry
	    53A35. % Non-Euclidean Differential Geometry
}

\begin{document}
\maketitle
\begin{abstract}
We  construct a weakly complete flat surface
in hyperbolic 3-space $H^3$ having 
a pair of hyperbolic Gauss maps
both of whose images are
contained in an arbitrarily given
open disc in the ideal boundary
of $H^3$. 
This construction 
is accomplished 
as an application of the minimal surface theory.
This looks an interesting phenomenon
if one comparing the
fact that there are no complete minimal 
(resp.\ constant mean curvature one)
surfaces in $\R^3$ (resp. $H^3$)
having bounded Gauss maps (resp.\ bounded hyperbolic Gauss maps). 
\end{abstract}

\section{Introduction}
It is a classical fact that any complete immersed flat surface 
in the hyperbolic 
3-space $H^3$ must be a horosphere or a hyperbolic cylinder,
where \lq flat\rq\ means that the Gaussian curvature vanishes identically. 
However, this does not imply the lack of an interesting 
global theory for flat surfaces.
G\'alvez, Mart\'inez and Mil\'an \cite{GMM} established a Weierstrass-type
representation formula for such surfaces.
In this paper, we discuss on flat surfaces 
with admissible singularities.
(A singular (i.e., degenerate) point is called {\it admissible} 
if the corresponding
points on nearby parallel surfaces are regularly immersed. See \cite{KUY}.)
Flat surfaces with admissible singularities 
in $H^3=\SL(2,\C)/\SU(2)$
are called {\it flat fronts}
which can be characterized as the projections of holomorphic
immersed Legendrian curves in $\SL(2,\C)$. Here,
a holomorphic map $\L\colon{}\D_1\to\SL(2,\C)$ is called 
{\em Legendrian\/}
if the pull-back $\L^*\Omega_{\SL}$ vanishes on $\D_1$, 
where $\Omega_{\SL}$ is the complex contact form on $\SL(2,\C)$
defined as 
\begin{equation}\label{eq:contact-sl}
    \Omega_{\SL}:=x_{11}dx_{22}-x_{21}dx_{12}
\qquad ((x_{ij})_{i,j=1,2}\in \SL(2,\C)).
\end{equation}
A flat front in $H^3$ induces a pair of
hyperbolic Gauss maps $(G_+,G_-)$, both of which are
holomorphic mappings into $\C\cup \{\infty\}$
as follows: 
Let $f:M^2\to H^3$ be a flat front
defined on a Riemann surface $M^2$.
The normal geodesic passing through $f(p)$
meets the ideal boundary $\partial H^3$
of the hyperbolic 3-space $H^3$
at $G_\pm (p)$
if one identify $\partial H^3$ by $\C\cup \{\infty\}$
via the Poincar\'e half-space model of the hyperbolic 3-space.
In \cite{KUY}, two hyperbolic Gauss maps $G_+$
and $G_-$ are indicated by $G$ and $G_*$,
respectively.
We are interested in the behavior of hyperbolic Gauss maps
of flat surfaces in $H^3$.
For flat fronts in $H^3$, the completeness
and the weak completeness are defined (cf. \cite[\S2]{KRUY2}). 
Completeness implies weak
completeness.
There are many complete or weakly complete flat fronts in $H^3$
as shown in \cite{KUY,KRUY1,KRUY2}.

It is well-known that the Gauss map 
(resp.\ hyperbolic Gauss map)
of a complete immersed minimal surface in $\R^3$
(resp.\ 
a complete immersed constant mean curvature one surface in $H^3$)
can omit at most four points (cf. Fujimoto \cite{F1}, \cite{F2} and 
Yu \cite{Y}).
It is well-known that
conformal minimal immersions are
obtained by taking the real part of
null holomorphic immersions.
Recently, as an improvement of 
Nadirashvili's discovery of
complete bounded minimal surfaces in $\R^3$,
the existence of
complete bounded null holomorphic immersion
\begin{equation}\label{eq:1}
   F:\D_1\longrightarrow \C^3
\end{equation}
of the unit disk $\D_1\subset\C$
is shown (cf. \cite{AL}), 
where {\it null\/} means that $F_z\cdot F_z$ vanishes
identically, here $F_z:=dF/dz$ is the derivative of 
$F$ with respect to the complex coordinate $z$ of $\D_1$
and the dot denotes the canonical complex bilinear form.
In fact, properly immersed 
null holomorphic curves in $\C^3$
of arbitrary topology are constructed in 
Alarc\'on and L\'opez \cite{AL}.

It is known that null curves in $\C^3$ are
closely related to Legendrian curves in $\C^3$ 
(cf.\ Bryant \cite{B} and also Ejiri-Takahashi \cite{ET}
for the corresponding $\SL(2,\C)$-case).
It can be easily checked
a holomorphic immersion
$\L:\D_1\to \SL(2,\C)$ is 
Legendrian if 
$\L^{-1}d\L$ is off-diagonal, namely,
there exist two
holomorphic 1-forms $\omega$ and $\theta$
on $\D_1$
such that
\begin{equation}\label{eq:L-intro}
\L^{-1}d\L=\pmt{0 & \theta \\ \omega & 0}.
\end{equation}
As pointed out in the above paragraph,
the projection of $\L$ into the hyperbolic
$3$-space gives a flat front in 
$H^3=\SL(2,\C)/\SU(2)$.
Then the singular set of this flat surface
in $H^3$ is given by
$$
\{z\in \D_1\,;\, |\rho(z)|=1\},
$$
where $\rho$ is the meromorphic
function defined by $\rho:=\theta/\omega$
called the {\it ratio of canonical forms}
 (cf.\ \cite{KUY,KRUY1}).
By Darboux's theorem, the contact structure of $\SL(2,\C)$
is locally equivalent to that of $\C^3$.
Moreover, the following explicit transformation
\begin{equation}\label{eq:T}
    \mathcal T\colon{}\C^3\ni (X,Y,Z)\longmapsto
            \begin{pmatrix}
	     e^{-Z} & Y e^Z\\
	     X e^{-Z} & (1+XY)e^Z
	    \end{pmatrix}\in\SL(2,\C)
\end{equation}
maps holomorphic Legendrian curves in 
$\C^3$
with contact form 
\begin{equation}\label{eq:contact-c}
\Omega_{C}:=dZ+Y dX
\end{equation}
to those in $\SL(2,\C)$.
Using this transformation,
we prove the following assertion.

\begin{introtheorem}
 There exists a weakly complete flat front
 in hyperbolic $3$-space whose induced pair of
 hyperbolic Gauss maps are contained in
 an arbitrarily given
 open disk in the ideal boundary
 of $H^3$. 
\end{introtheorem}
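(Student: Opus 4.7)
The plan is to exploit the transformation $\mathcal T$ of~\eqref{eq:T} with a carefully chosen holomorphic Legendrian model in $\C^3$, drawing ultimately on the Alarc\'on--L\'opez theory of bounded complete null holomorphic curves.

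\emph{Step 1 (Normalization via isometries).} Left multiplication $\L \mapsto P\L$ by $P \in \SL(2,\C)$ corresponds to an isometry of $H^3$, acting on $\partial H^3 = \C \cup \{\infty\}$ by the M\"obius transformation $w \mapsto P \cdot w$ and equivariantly on the pair $(G_+, G_-)$.  Hence it suffices to prove the following: for any prescribed $\epsilon > 0$, there is a weakly complete flat front whose hyperbolic Gauss maps both lie in $\{|w| < \epsilon\} \subset \C$.

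\emph{Step 2 (Legendrian model and Gauss maps).} Fix $c \in \C$ with $|c|$ large and, for a holomorphic $X\colon \D_1 \to \C$ to be chosen, set $\tilde\L(z) := (X(z),\, c,\, -cX(z))$.  Since $dZ + Y\,dX = -c\,dX + c\,dX = 0$, this is a Legendrian map into $(\C^3, \Omega_C)$.  Put $\L := \mathcal T \circ \tilde\L\colon \D_1 \to \SL(2,\C)$; a direct computation from~\eqref{eq:T} yields
\[
\L^{-1}\,d\L = \pmt{0 & -c^2 e^{-2cX} \\ e^{2cX} & 0}\,dX,
\]
so $\L$ is a holomorphic Legendrian immersion at every point where $X' \neq 0$, with canonical forms $\omega = e^{2cX}\,dX$ and $\theta = -c^2 e^{-2cX}\,dX$.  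Reading off the entries of $\mathcal T(X, c, -cX)$, the flat front $f = \L\L^*$ has hyperbolic Gauss maps
\[
G_+ = \frac{1}{X}, \qquad G_- = \frac{c}{1+cX}.
\]
Replacing $\L$ by $P\L$ with $P = \pmt{0 & 1 \\ 1 & 0}$ (the isometry $w \mapsto 1/w$) yields Gauss maps $\tilde G_+ = X$ and $\tilde G_- = X + 1/c$; choosing $\delta, |c|^{-1} < \epsilon/2$, the assumption $|X| < \delta$ forces both into $\{|w| < \epsilon\}$.

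\emph{Step 3 (Weak completeness).} Since $X$ is bounded, $\Re(cX)$ is bounded, so
\[
|\omega|^2 + |\theta|^2 = \bigl(e^{4\Re(cX)} + |c|^4 e^{-4\Re(cX)}\bigr)\,|X'(z)|^2\,|dz|^2
\]
is bi-Lipschitz equivalent to $|X'(z)|^2\,|dz|^2$.  As $|\omega|^2 + |\theta|^2$ is (up to a constant) the pullback via $\L$ of a bi-invariant Hermitian metric on $\SL(2,\C)$, its completeness implies weak completeness of $f$ in the sense of~\cite{KRUY2}.  Hence it remains only to choose $X$ so that $|X'(z)|^2\,|dz|^2$ is a complete Riemannian metric on $\D_1$.

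\emph{Main obstacle.} The decisive technical point is the existence of a holomorphic $X\colon \D_1 \to \C$ satisfying simultaneously (i) $|X(z)| < \delta$, (ii) $X'(z) \neq 0$ everywhere, and (iii) $|X'(z)|^2\,|dz|^2$ complete on $\D_1$.  Such an $X$ is precisely a Nadirashvili-type object whose derivative grows sharply near $\partial \D_1$ (giving completeness) while the integral remains bounded; this is the phenomenon established in full generality by the Alarc\'on--L\'opez construction~\cite{AL} of complete bounded null holomorphic immersions of $\D_1$ in $\C^3$, from which the required $X$ may be extracted (and the non-vanishing of $X'$ arranged by a small generic perturbation).
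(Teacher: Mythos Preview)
Your reduction in Steps~1--3 is clean, and the Gauss-map computation with $Y=c$ constant is correct. However, the ``Main obstacle'' you flag at the end is not a technicality to be filled in later---it is fatal, and the scheme cannot work as stated.

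The function $X$ you seek does not exist. Suppose $X\colon\D_1\to\C$ is holomorphic with $X'\ne 0$ everywhere and $|X'(z)|^2\,|dz|^2=|dX|^2$ is complete. Then $X\colon(\D_1,|dX|^2)\to(\C,|dw|^2)$ is a local isometry whose source is complete, hence a Riemannian covering map; since $\C$ is simply connected, $X$ is a global isometry and in particular surjective onto $\C$, contradicting boundedness. (Equivalently: a complete, simply connected flat surface is isometric to the Euclidean plane, and a bounded holomorphic function cannot realize that.) The Alarc\'on--L\'opez result gives completeness with respect to the \emph{full} metric $|dX|^2+|dY|^2+|dZ|^2$; no single coordinate can carry the completeness alone, so nothing can be ``extracted'' in the way you suggest, and no generic perturbation will help.

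This is precisely why the paper cannot reduce to a one-variable model. The paper keeps \emph{both} $X$ and $Y$ from the null curve of~\cite{AL} and forms the Legendrian curve $(X,Y,W)$ with $W=-\int Y\,dX$. Lemma~\ref{lem:dsigma} shows $|dX|^2+|dY|^2$ is complete (this metric is not flat, so the obstruction above evaporates), and Proposition~\ref{prop:key} then transfers completeness to $ds^2_{\L}$ via a case split on whether $f\circ\gamma$ stays bounded in $H^3$. The price is that with $Y$ non-constant the function $W$ is no longer bounded a priori, so the control on $|e^{\pm W}|$ along a divergent path has to be recovered from boundedness of $\L\circ\gamma$---that is the substantive content of Proposition~\ref{prop:key}, and it is exactly the step your simplification was designed to avoid.
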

It should be remarked that there are no
compact flat fronts in $H^3$
(cf.\ \cite[Proposition 3.6]{KUY}).
Also, the assumption of
weak completeness in the theorem
is crucial, since 
two hyperbolic Gauss maps
can omit at most finite points
if the given flat front
is complete (see Remark \ref{rmk:complete}).
In contrast to this theorem,
Kawakami \cite{Kaw} showed the 
ratio of canonical forms $\rho$ 
of weakly complete flat fronts can omit at most three 
exceptional values.

\section{Proof of the theorem}
\label{sec:1}

This section is devoted to prove the  theorem
as we stated in the introduction.

Recall that
\begin{equation}\label{eq:Q}
   H^3:=\SL(2,\C)/\SU(2)\\
=\{a a^*\,;\,a\in\SL(2,\C)\}\qquad (a^*=\trans{\bar a}).
\end{equation}
is the hyperbolic $3$-space of constant curvature $-1$.
A smooth map $f\colon{}\D_1 \to H^3$ 
is called a ({\em wave}) {\em front\/}
if there exists a Legendrian immersion
$L_f\colon{}\D_1\to T^*_1H^3$ with respect 
to the canonical contact structure of the unit
cotangent bundle $\pi:T^*_1H^3\to H^3$ such 
that $\pi\circ L_f=f$.
For a holomorphic Legendrian immersion $\L\colon{}\D_1\to\SL(2,\C)$,
the projection
\begin{equation}\label{eq:flat-front}
   f:=\L\L^*\colon{}\D_1\longrightarrow H^3
\end{equation}
gives a {\em flat front\/} in $H^3$
 (see \cite{KUY,KRUY1}  for the definition of
flat fronts).
In particular, the Gaussian curvature vanishes at
each point where $f$ is an immersion.
We call $\L$ in \eqref{eq:flat-front} the {\em holomorphic lift\/}
of $f$.
A flat front $f$ is called {\em weakly complete\/}
if its holomorphic lift is complete 
with respect to the 
pull-back metric 
\begin{equation}\label{eq:ds^2_L}
ds^2_{\L}= |\omega|^2+|\theta|^2\qquad 
(|\omega|^2:=\omega\bar \omega,\,\,\,|\theta|^2:=\theta\bar \theta)
\end{equation}
of
the canonical Hermitian metric 
of $\SL(2,\C)$ by $\L$,
where 
$\omega$ and $\theta$ are
holomorphic 1-forms 
satisfying  \eqref{eq:L-intro}
defined on $\D_1$
(cf.\ \cite{KUY,KRUY1}).
The first fundamental form of $f$
is written as
\begin{equation}\label{eq:first}
    ds^2_{f}:= |\omega|^2+|\theta|^2 +\omega\theta+\bar\omega\bar\theta 
           = |\omega+\bar\theta|^2.
\end{equation}
On the other hand, the pair of hyperbolic Gauss 
maps of $f$ is given by
\begin{equation}\label{eq:Gpm}
G_+:=\frac{A}{C},\qquad G_-:=\frac{B}{D},
\end{equation}
where 
$$
\L:=\pmt{A & B \\ C & D}.
$$

\medskip
We now prove the theorem in the introduction:
Let 
$$
F=(X,Y,Z):\D_1\longrightarrow \C^3
$$
be a bounded null holomorphic immersion
whose induced metric is complete.
Without loss of generality, we may
assume that
\begin{equation}\label{eq:XY}
1<|X|<2,\qquad |Y|<\frac13
\end{equation}
hold on $\D_1$.
Let 
$(g,\eta\,dz)$  be the Weierstrass data 
of $F$, that is,
$$
F_z
\left(:=\frac{d F}{dz}\right)
=\frac12(1-g^2,\sqrt{-1}(1+g^2),2g)\eta.
$$
The metric induced by $F$ is written as
$$
\frac12(1+|g|^2)^2|\eta dz|^2.
$$
By \eqref{eq:XY},
the projection $\hat F:=(X,Y)\colon{}\D_1\to \C^2$
of $F$  is a bounded holomorphic map.
Moreover, the following assertion holds:

\begin{lemma}\label{lem:dsigma}
The metric 
\begin{equation}\label{eq:dsigma}
d\sigma^2:=|dX|^2+|dY|^2,
\end{equation}
induced by $\hat F$ is 
a complete Riemannian metric on $\D_1$.
In particular, $\hat F$ is a holomorphic
immersion.
\end{lemma}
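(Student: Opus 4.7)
The plan is to prove the inequality $ds^2_F \le 2\, d\sigma^2$ on $\D_1$, where $ds^2_F$ denotes the induced metric of the null holomorphic immersion $F$. Since $ds^2_F$ is assumed to be complete, this comparison yields completeness of $d\sigma^2$ at once: any divergent path in $\D_1$ has infinite length in $ds^2_F$, hence also in $d\sigma^2$.

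The key observation is that the null condition $F_z \cdot F_z = X_z^2 + Y_z^2 + Z_z^2 = 0$ gives
\begin{equation*}
|Z_z|^2 = |X_z^2 + Y_z^2| \le |X_z|^2 + |Y_z|^2,
\end{equation*}
by the triangle inequality in $\C$. Therefore
\begin{equation*}
ds^2_F = \bigl(|X_z|^2 + |Y_z|^2 + |Z_z|^2\bigr)|dz|^2
      \le 2\bigl(|X_z|^2 + |Y_z|^2\bigr)|dz|^2 = 2\, d\sigma^2,
\end{equation*}
which is the required bound. This is the central step and is essentially the only place where the null condition plays a role; there is no serious obstacle, as the inequality is immediate from the polynomial identity.

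For the immersion assertion, I would simply note that $F$ being an immersion at $p$ means $|X_z|^2 + |Y_z|^2 + |Z_z|^2 > 0$ there. Combined with the bound $|Z_z|^2 \le |X_z|^2 + |Y_z|^2$ above, this forces $|X_z|^2 + |Y_z|^2 > 0$, so $\hat F_z = (X_z, Y_z)$ does not vanish, i.e., $\hat F$ is a holomorphic immersion. (Equivalently, if both $X_z$ and $Y_z$ vanished at a point, the null relation would force $Z_z = 0$ there too, contradicting that $F$ is an immersion.)

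The hypotheses \eqref{eq:XY} are not needed for this lemma; they are normalization conditions intended for later use (to control $\hat F$ in the image of the transformation $\mathcal T$). The whole argument is a three-line consequence of the triangle inequality applied to the null relation.
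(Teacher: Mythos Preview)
Your proof is correct and establishes exactly the same key inequality $ds^2_F\le 2\,d\sigma^2$ that the paper does. The paper reaches it via the Weierstrass data $(g,\eta\,dz)$: writing $2d\sigma^2=(1+|g|^4)|\eta\,dz|^2$ and $ds^2_F=\tfrac12(1+|g|^2)^2|\eta\,dz|^2$, the comparison reduces to the elementary inequality $1+|g|^4\ge\tfrac12(1+|g|^2)^2$. Your route is more direct: you use the null relation $Z_z^2=-(X_z^2+Y_z^2)$ and the triangle inequality to get $|Z_z|^2\le |X_z|^2+|Y_z|^2$, bypassing the Weierstrass parametrization entirely. The two inequalities are in fact identical once unwound, but your formulation has the advantage of making the role of the null condition transparent and of not introducing auxiliary data. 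Your explicit treatment of the immersion assertion (which the paper leaves implicit in ``complete Riemannian metric'') and your observation that the bounds \eqref{eq:XY} are unused here are both correct.
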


\begin{proof}
We have that
\begin{align*}
2d\sigma^2&
=2\left|\hat F_z\right|^2|dz|^2
      =\frac12\bigl(|1-g^2|^2+|1+g^2|^2\bigr)|\eta dz|^2
      =(1+|g|^4)|\eta dz|^2\\
&\ge \frac12(1+|g|^2)^2|\eta dz|^2
      =\left|F_z\right|^2|dz|^2
=|dX|^2+|dY|^2+|dZ|^2.
\end{align*}
Since $F$ is a complete immersion,
$d\sigma^2$ is a complete Riemannian metric.
\end{proof}

We now consider a
new holomorphic immersion defined by
$$
\tilde F:=(X,Y,W):\D_1\longrightarrow \C^3
$$
where
$$
W:=-\int_0^z Y dX.
$$
Then $\tilde F$
gives a holomorphic Legendrian immersion
with respect to the contact form $\Omega_C$ 
as in \eqref{eq:contact-c}.
Then the induced map (see \eqref{eq:T} for the definition of 
$\mathcal T$) 
$$
\L:=\mathcal T\circ \tilde F:\D_1\longrightarrow \SL(2,\C)
$$
can be written by
\begin{equation}\label{eq:L}
\L=\pmt{e^{-W} & Y e^W \\ X e^{-W} & (1+XY)e^W},
\end{equation}
and the mapping
$f:\D_1\to H^3$ given by 
\eqref{eq:flat-front} is a
flat front.
In fact, by a straightforward calculation,
we have that
\begin{equation}\label{eq:dL}
\L^{-1}d\L=
\pmt{
-(YdX+dW) & e^{2W}(dY-Y^2dX)\\
e^{-2W}dX & YdX+dW
}.
\end{equation}
Since $YdX+dW=\tilde F^*\Omega_C$ vanishes
identically, the $sl(2,\C)$-valued
$1$-form $\L^{-1}d\L$ is off-diagonal
(i.e. we just checked that $\L$ is Legendrian).
The following assertion holds:

\begin{proposition}\label{prop:bdd}
The images of two hyperbolic Gauss maps
$G_+$ and $G_-$ associated to $f$
lie in the unit disk $\{\xi\in \C\,;\, |\xi|<1\}$.
\end{proposition}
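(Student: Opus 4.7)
The plan is to exploit the fact that in the formula \eqref{eq:L} for $\L$, the pair of hyperbolic Gauss maps \eqref{eq:Gpm} depends only on the two-dimensional projection $\hat F = (X,Y)$ and not on the contact primitive $W$. Reading off the matrix entries of $\L$ from \eqref{eq:L} gives $A=e^{-W}$, $B=Ye^W$, $C=Xe^{-W}$, $D=(1+XY)e^W$, and the exponential factors cancel in the ratios so that
\begin{equation*}
G_+ = \frac{A}{C} = \frac{1}{X}, \qquad G_- = \frac{B}{D} = \frac{Y}{1+XY}.
\end{equation*}

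The bound $|G_+|<1$ is then immediate from $|X|>1$ in \eqref{eq:XY}. For $G_-$, I would combine both inequalities in \eqref{eq:XY}: since $|XY| < 2 \cdot (1/3) = 2/3$, the reverse triangle inequality gives $|1+XY| \ge 1-|XY| > 1/3$, whence
\begin{equation*}
|G_-| \;<\; \frac{1/3}{1/3} \;=\; 1.
\end{equation*}

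There is no real obstacle here; the content of the proposition is just the observation that the clean formulas $G_+=1/X$ and $G_-=Y/(1+XY)$ reduce control of the Gauss maps to the bounded holomorphic map $\hat F$. The particular constants in \eqref{eq:XY} were chosen precisely so that these two elementary estimates close with strict inequality, which is why rescaling and translating the original bounded null immersion $F$ to satisfy \eqref{eq:XY} is treated as a harmless normalization rather than as a restrictive hypothesis.
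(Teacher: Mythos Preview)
Your proof is correct and follows exactly the paper's approach: compute $G_+=1/X$ and $G_-=Y/(1+XY)$ from \eqref{eq:L} and \eqref{eq:Gpm}, then invoke the bounds \eqref{eq:XY}. The paper merely states that ``the inequalities \eqref{eq:XY} yield the assertion'' without writing out the triangle-inequality estimate for $G_-$; your version makes that step explicit.
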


\begin{proof}
By \eqref{eq:Gpm} and \eqref{eq:L}, 
we have that
$$
G_+=\frac1{X},\qquad G_-=\frac{Y}{1+XY}.
$$
Then the inequalities \eqref{eq:XY}
yield the assertion.
\end{proof}

To prove the completeness of the metric
$ds^2_{\L}$, we prepare the following 
assertion:

\begin{lemma}\label{lem:key}
The metric $ds^2_{\L}$ is positive definite
and satisfies the inequality
\begin{equation}\label{eq:kuy}
ds^2_f\le 2 ds^2_{\L}.
\end{equation}
\end{lemma}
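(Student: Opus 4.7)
The plan is to verify two separate claims: positive definiteness of $ds^2_{\L}$, and then the pointwise inequality $ds^2_f \le 2\, ds^2_{\L}$, taking them in that order. From formula \eqref{eq:dL} I would first read off the canonical forms
\[
\omega = e^{-2W}\, dX, \qquad \theta = e^{2W}\bigl(dY - Y^2\, dX\bigr),
\]
so that $ds^2_{\L}=|\omega|^2+|\theta|^2$ can be analyzed purely in terms of $dX$, $dY$ and the nowhere-vanishing factors $e^{\pm 2W}$.

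For positive definiteness, I would argue by contradiction: suppose $\omega$ and $\theta$ both vanish at some point $p\in\D_1$. Since $e^{\pm 2W}$ never vanishes, $\omega(p)=0$ forces $dX|_p=0$, and then $\theta(p)=e^{2W}(dY-Y^2\,dX)|_p=0$ forces $dY|_p=0$ as well. But this contradicts Lemma \ref{lem:dsigma}, which guarantees that $d\sigma^2=|dX|^2+|dY|^2$ is a Riemannian (hence nondegenerate) metric on $\D_1$. Therefore $|\omega|^2+|\theta|^2>0$ at every point, and $ds^2_{\L}$ is positive definite.

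For the inequality, I would use the identity $ds^2_f=|\omega+\bar\theta|^2$ recorded in \eqref{eq:first}. Evaluated on a real tangent vector $v$, this equals $|\omega(v)+\overline{\theta(v)}|^2$; the complex triangle inequality followed by the elementary bound $(a+b)^2\le 2(a^2+b^2)$ for nonnegative reals then yields
\[
ds^2_f(v,v) \;\le\; \bigl(|\omega(v)|+|\theta(v)|\bigr)^2 \;\le\; 2\bigl(|\omega(v)|^2+|\theta(v)|^2\bigr) \;=\; 2\,ds^2_{\L}(v,v).
\]
This is a pointwise, coordinate-free estimate that is valid for any pair of $(1,0)$-forms on a Riemann surface, so it carries no special content beyond the formula \eqref{eq:first}. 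The genuine substance of the lemma is therefore the first part, where the explicit construction $\L=\mathcal{T}\circ\tilde F$ interacts with Lemma \ref{lem:dsigma} to rule out simultaneous vanishing of $\omega$ and $\theta$. I do not expect any real obstacle: once the forms are identified from \eqref{eq:dL}, both assertions unwind by direct inspection.
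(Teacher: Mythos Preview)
Your proof is correct and, for the inequality \eqref{eq:kuy}, essentially identical to the paper's: both expand $ds^2_f$ via \eqref{eq:first} and apply $2|\omega||\theta|\le|\omega|^2+|\theta|^2$. For positive definiteness the paper argues more conceptually---$\mathcal{T}$ is a local diffeomorphism and $\tilde F$ is an immersion, hence $\L=\mathcal{T}\circ\tilde F$ is an immersion and $ds^2_{\L}$ is positive definite---whereas you unwind this explicitly by reading off $\omega,\theta$ from \eqref{eq:dL} and invoking Lemma~\ref{lem:dsigma} to rule out $dX=dY=0$; these are two packagings of the same fact, since the paper's assertion that $\tilde F$ is an immersion already rests on Lemma~\ref{lem:dsigma}.
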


\begin{proof}
Since $\mathcal T$ is a local diffeomorphism,
$\L$ is an immersion, and 
$ds^2_{\L}$ is positive definite.
The inequality \eqref{eq:kuy}
is obtained as 
follows (cf. \eqref{eq:ds^2_L} and \eqref{eq:first})
$$
ds^2_{f}= |\omega|^2+|\theta|^2 +\omega\theta+\bar\omega\bar\theta 
\le 2|\omega|\,|\theta|+|\omega|^2+|\theta|^2
\le 2(|\omega|^2+|\theta|^2)=2ds^2_{\L}.
$$ 
\end{proof}

The following
assertion is a key to prove the
theorem.

\begin{proposition}\label{prop:key}
The metric $ds^2_{\L}$ is complete.
\end{proposition}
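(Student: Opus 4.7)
The plan is to argue by contradiction. Suppose $\gamma\colon[0,1)\to\D_1$ is a divergent curve with $L_{\L}(\gamma):=\int_\gamma ds_{\L}<\infty$. I will first show $\L\circ\gamma$ converges to some $\L_\infty\in\SL(2,\C)$, then exploit the explicit matrix form \eqref{eq:L} together with the bounds \eqref{eq:XY} to pin down $\Re W$ along the tail of $\gamma$, and finally compare $ds^2_{\L}$ to the complete metric $d\sigma^2$ from Lemma~\ref{lem:dsigma} to derive a contradiction.

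For the convergence step, write $\L=\pmt{A & B \\ C & D}$. From $d\L=\L\pmt{0 & \theta \\ \omega & 0}$ one reads off $dA=B\omega$, $dB=A\theta$, $dC=D\omega$, $dD=C\theta$, and a direct Gronwall estimate on $N:=|A|^2+|B|^2+|C|^2+|D|^2$ yields $|d\log N|\leq\sqrt{2}\,ds_{\L}$. Hence $N$ stays bounded along $\gamma$, so $\L\circ\gamma$ lies in a compact subset of the closed submanifold $\SL(2,\C)\subset M_2(\C)\cong\C^4$, on which $ds^2_{\L}$ is equivalent to the ambient Euclidean metric; finiteness of $L_{\L}(\gamma)$ therefore forces convergence $\L\circ\gamma\to\L_\infty=\pmt{a & b \\ c & d}\in\SL(2,\C)$.

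The hard part will be to rule out $a=0$. From \eqref{eq:L}, $e^{-W}\to a$, $Ye^W\to b$, $Xe^{-W}\to c$, and $(1+XY)e^W\to d$ along $\gamma$. If $a=0$ then $|e^W|=|e^{-W}|^{-1}\to\infty$; but \eqref{eq:XY} yields the crucial pointwise bound
\[
|1+XY|\geq 1-|X||Y|>1-2\cdot\tfrac{1}{3}=\tfrac{1}{3}
\]
throughout $\D_1$, forcing $|(1+XY)e^W|\geq\tfrac{1}{3}|e^W|\to\infty$ and contradicting convergence to $d\in\C$. Consequently $|a|>0$ and $\Re W\to-\log|a|$, so $|\Re W|\leq M$ for some $M$ on a terminal subarc $\gamma|_{[t_0,1)}$.

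The concluding step is routine. On this tail $e^{\pm 4\Re W}\geq e^{-4M}$, hence
\[
ds^2_{\L}=e^{-4\Re W}|dX|^2+e^{4\Re W}|dY-Y^2 dX|^2\geq e^{-4M}\bigl(|dX|^2+|dY-Y^2 dX|^2\bigr).
\]
Using $|dY|^2\leq 2|dY-Y^2 dX|^2+2|Y|^4|dX|^2$ together with $|Y|^4<1/81$ from \eqref{eq:XY}, a coefficient estimate gives $|dX|^2+|dY-Y^2 dX|^2\geq\tfrac{1}{2}d\sigma^2$, so $ds_{\L}\geq(e^{-2M}/\sqrt{2})\,d\sigma$ on $\gamma|_{[t_0,1)}$. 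Since this subarc is still divergent in $\D_1$ and $d\sigma^2$ is complete, $\int_{t_0}^1 ds_{\L}=\infty$, contradicting $L_{\L}(\gamma)<\infty$.
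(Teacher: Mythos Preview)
Your proof is correct, but it takes a genuinely different route to the key boundedness of $\L\circ\gamma$. The paper splits into two cases according to whether $f\circ\gamma$ is bounded in $H^3$: if unbounded, completeness of $H^3$ together with Lemma~\ref{lem:key} ($ds^2_f\le 2\,ds^2_{\L}$) forces infinite $ds_{\L}$-length; if bounded, compactness of $\SU(2)$ and the identification $H^3=\SL(2,\C)/\SU(2)$ yield that $\L\circ\gamma$ is bounded in $\SL(2,\C)$. You bypass this dichotomy entirely with a direct Gronwall estimate on $N=|A|^2+|B|^2+|C|^2+|D|^2$, obtaining boundedness (indeed convergence) of $\L\circ\gamma$ without ever invoking $H^3$ or Lemma~\ref{lem:key}. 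This makes your argument more self-contained---it needs nothing about the hyperbolic geometry downstairs---at the cost of a small ODE computation; the paper's approach is more geometric and exploits the fibration structure. From that point on the two proofs coincide in spirit: both use \eqref{eq:XY} via $|1+XY|>\tfrac13$ to pin $|e^{W}|$ between two positive constants along $\gamma$, and then bound $ds^2_{\L}=e^{-4\Re W}|dX|^2+e^{4\Re W}|dY-Y^2dX|^2$ below by a constant multiple of $d\sigma^2$, invoking Lemma~\ref{lem:dsigma}. (Your convergence step is in fact stronger than needed---mere boundedness of $N$ already gives the two-sided bound on $|e^W|$, as in the paper---but there is no harm in it.)
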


\begin{proof}
We fix a piecewise smooth
divergent path
$
\gamma:[0,\infty)\to\D_1
$
arbitrarily.
It is sufficient to show that
the image of $\gamma$ has
infinite length with respect to
$ds^2_{\L}$.
If $f(\gamma([0,\infty)))$ is unbounded
in $H^3$,
then the path $\gamma$ must have
infinite length with respect to $ds^2_f$
because of the completeness of
the hyperbolic space $H^3$.
Then the inequality \eqref{eq:kuy}
implies that $\gamma$ has
infinite length with respect to
$ds^2_{\L}$.
So we assume that 
$f(\gamma([0,\infty)))$ is bounded
in $H^3$.
Since $\SU(2)$ is compact,
\eqref{eq:Q} yields that
the image 
$\L(\gamma([0,\infty)))$ is bounded
in $\SL(2,\C)$.
By \eqref{eq:L}, there exists
a positive constant $m$ such that
$$
|e^{-W}|<m,\qquad \left|(1+XY)e^{W}\right|<\frac{m}3
$$
holds on $\gamma$. 
Using \eqref{eq:XY}, we have that
\begin{equation}\label{eq:m}
\frac1{m}\le |e^{W(\gamma(t))}|\le m
\qquad (t\ge 0).
\end{equation}
On the other hand, it holds 
that (cf. \eqref{eq:dL})
$$
\L^{-1}d\L=
\pmt{
0 & e^{2W}(dY-Y^2dX) \\
e^{-2W}dX & 0
}.
$$
By \eqref{eq:ds^2_L},
it holds
along $\gamma$ that
\begin{align*}
ds^2_{\L}
&=
|e^{-4W}|\,|dX|^2+
|e^{4W}|\,|dY-Y^2dX|^2 \\
&\ge
\frac1{m^4}\biggl(|dX|^2+
|dY-Y^2dX|^2\biggr) \\
&\ge
\frac1{m^4}\biggl(|dX|^2+
|dY|^2-2(2|Y^2dX|)\frac{|dY|}2+|Y|^4\,|dX|^2\biggr) \\
&\ge
\frac1{m^4}\biggl(|dX|^2+
|dY|^2-(4|Y^2dX|^2+\frac{|dY|^2}4 )+|Y|^4\,|dX|^2\biggr) \\
&\ge
\frac1{m^4}
\biggl((1-3|Y|^4)|dX|^2+\frac{3|dY|^2}4\biggr). 
\end{align*}
Again using \eqref{eq:XY}, 
$$
ds^2_{\L}
\ge \frac1{m^4}
\biggl(\frac{26|dX|^2}{27}+\frac{3|dY|^2}4\biggr) 
\ge \frac3{4m^4}
\biggl(|dX|^2+|dY|^2\biggr)=\frac3{4m^4}d\sigma^2 
$$
hold on $\gamma$.
Since $d\sigma^2$ is a complete metric 
(cf. Lemma \ref{lem:dsigma}),
we get the conclusion. 
\end{proof}

\begin{proof}[Proof of the theorem]
Let $r$ be an arbitrarily sufficiently
small positive number.
We set
$$
\L_r:=\pmt{r & 0 \\ 0 & r^{-1}}\L.
$$
Then $f_r:=\L_r\L_r^*$ is a
desired flat front.
In fact, $f_r$ is weakly complete
since the induced metric of $\L_r$
is complete (cf. Proposition \ref{prop:key}).
On the other hand,
the hyperbolic Gauss maps of 
$f_r$ are equal to $r^2G_+$ and $r^2G_-$,
and their images are contained in a
disk of radius $r^2$
(cf. Proposition \ref{prop:bdd}).
\end{proof}
%%%%%%%%%%%%%%%%%%%%%%%%%%%%%%%%%%%%%%%%%%%%%%%%%%

\begin{remark}
\label{rmk:complete}
Let $(G_+,G_-)$ be the pair of
hyperbolic Gauss maps
induced by a complete flat front
$f:M^2\to H^3$, where
$M^2$ is a Riemann surface.
By \cite[Lemma 3.3]{KUY},
there exists a closed Riemann surface
$\bar M^2$ and finite points
$p_1,...,p_n$ on $\bar M^2$
such that
$M^2$ is bi-holomorphic to
$\bar M^2\setminus \{p_1,...,p_n\}$.
If $G_+$ (resp.\ $G_-$) has an essential singularity
at some $p_j$ (i.e. $p_j$ is an irregular end),
Picard's theorem implies that $G_+$ (resp.\ $G_-$)
omits at most two points in $\C\cup\{\infty\}$.
Otherwise, both $G_+$ and $G_-$ are meromorphic 
functions on a compact Riemann surface $\bar M^2$.
Hence both $G_+$ and $G_-$ are surjective maps
onto $\C\cup\{\infty\}$.
\end{remark}

We now remark that
the following problem seems interesting
as an analogue of Calabi-Yau problem
in minimal surface theory:

\begin{question} 
Are there complete bounded 
holomorphic Legendrian curves
immersed in $\C^3$?
\end{question}

As seen in our previous arguments,
this problem is related to our
main result.
Moreover, it is also 
closely related
to the existence of bounded
weakly complete improper affine fronts
in the affine space $\R^3$:
A notion of {\em IA-maps\/}
in the affine $3$-space has been introduced 
by A. Mart\'{\i}nez  \cite{Martinez}.
IA-maps  are improper affine spheres
with a certain kind of singularities.
Since all of IA-maps
are wave fronts (see \cite{Nakajo, UY}),
we call them  {\em improper affine fronts}
(the terminology \lq improper affine fronts\rq\
 has been already used in
Kawakami-Nakajo \cite{KN}).
The precise definition of improper affine fronts
is given in \cite[Remark 4.3]{UY}.
In \cite{UY}, {\it weak completeness\/} of improper affine fronts
is introduced.
Then we have
\begin{proposition}\label{thm:affine}
The existence of a complete bounded immersed 
Legendrian curve as in the above question
would imply
the existence of a weakly complete improper affine front
whose image is bounded.
\end{proposition}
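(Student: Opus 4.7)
The plan is to transfer the construction used for flat fronts to the improper affine setting by exploiting the known fact that improper affine fronts admit a Weierstrass-type representation in which the holomorphic data is precisely a holomorphic Legendrian curve in $\C^3$ with respect to the contact form $\Omega_C=dZ+YdX$ of \eqref{eq:contact-c}. The strategy parallels the argument of the main theorem: feed a hypothetical complete bounded holomorphic Legendrian immersion $L\colon\D_1\to\C^3$ into the appropriate representation formula, and verify boundedness and weak completeness of the resulting improper affine front.

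First, I would recall from Mart\'{\i}nez \cite{Martinez}, Nakajo \cite{Nakajo} and Umehara-Yamada \cite{UY} the explicit formula that produces an improper affine front $\psi\colon\D_1\to\R^3$ out of a pair of holomorphic functions $(F,G)$, together with an auxiliary third coordinate that depends on $\int F\,dG$. This auxiliary term is exactly the third component $Z=-\int F\,dG$ of a holomorphic Legendrian curve $(G,F,Z)$ in $(\C^3,\Omega_C)$. Thus the data of an improper affine front is equivalent to the data of a holomorphic Legendrian curve in $\C^3$. Moreover, the notion of weak completeness from \cite{UY} is defined in terms of completeness of the canonical flat metric $|dF|^2+|dG|^2$ pulled back via this Legendrian lift, in complete analogy with $ds^2_{\L}$ of \eqref{eq:ds^2_L}.

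Given a hypothetical complete bounded holomorphic Legendrian immersion $L=(X,Y,Z)\colon\D_1\to\C^3$, I would set $G:=X$ and $F:=Y$, and define $\psi$ through the representation formula referenced above. Boundedness of the image of $\psi$ in $\R^3$ is immediate since the formula expresses each coordinate of $\psi$ as a real-analytic function of the bounded quantities $X,Y,Z$. For weak completeness, I would fix an arbitrary divergent path $\gamma$ in $\D_1$ and argue, much as in the proof of Proposition \ref{prop:key}, that
\[
|dX|^2+|dY|^2+|dZ|^2=|dG|^2+(1+|F|^2)|dF|^2\cdot(\text{schematic})
\]
is bi-Lipschitz equivalent along $\gamma$ to $|dF|^2+|dG|^2$, since $F=Y$ is bounded and the Legendrian relation $dZ=-F\,dG$ makes the $|dZ|^2$ contribution controlled by $|dG|^2$. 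Completeness of the induced metric of $L$ in $\C^3$ then forces $|dF|^2+|dG|^2$ to be complete along $\gamma$, giving weak completeness of $\psi$.

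The main obstacle is not analytic but bookkeeping: identifying exactly which pair of holomorphic functions in the Weierstrass-type representation of \cite{Martinez,UY,KN} plays the role of $(F,G)$, and verifying that the normalization making $|dF|^2+|dG|^2$ the \emph{weak completeness} metric agrees with the normalization used when the data is viewed as a Legendrian curve in $(\C^3,\Omega_C)$. Once this dictionary is correctly set up, both the bounded-image and weak-completeness conclusions follow directly from the hypothesized boundedness and completeness of $L$, mirroring the roles played in the present paper by Propositions \ref{prop:bdd} and \ref{prop:key}.
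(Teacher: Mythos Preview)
Your proposal is correct and follows essentially the same route as the paper: feed the pair $(X,Y)$ from the hypothetical bounded complete Legendrian immersion into Mart\'{\i}nez' representation formula, read off boundedness of the resulting improper affine front from boundedness of $X,Y,Z$, and deduce weak completeness from the fact that $dZ=-Y\,dX$ with $Y$ bounded gives $|dX|^2+|dY|^2+|dZ|^2\le C(|dX|^2+|dY|^2)$. Two minor remarks: your displayed ``schematic'' has the $F$ and $G$ factors swapped (the correct expansion is $(1+|F|^2)|dG|^2+|dF|^2$, as your prose in fact indicates), and the argument here is actually simpler than Proposition~\ref{prop:key}---no divergent-path case analysis is needed, since the bi-Lipschitz bound holds uniformly on all of $\D_1$.
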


\begin{proof}
 Let $F=(X,Y,Z)$ be a complete bounded Legendrian immersion into $\C^3$.
 Since $F$ is Legendrian, \eqref{eq:contact-c} yields that
 $dZ = -Y\,dX$.
 Here, by completeness of $F$, the induced metric
 \[
   ds^2_F=
     |dX|^2+|dY|^2+|dZ|^2 
   = |dX|^2+|dY|^2+|YdX|^2 
   =  (|Y|^2+1)|dX|^2+|dY|^2
 \]
 is complete.
 Moreover, since the image of $F$ is bounded, we have
 \[
     ds^2_F\leq C(|dX|^2+|dY|^2)\qquad
     (\text{$C> 0$ is a constant}).
 \]
 Thus, the metric
 \begin{equation}\label{eq:d-tau}
     d\tau^2 := |dX|^2 + |dY|^2
 \end{equation}
 is complete.
Hence, we have the following improper affine front $f$
substituting the pair of holomorphic functions
$(X,Y)$ into Martinez' representation 
formula \cite[Theorem 3]{Martinez}
 \begin{align}
    f&= \left(X+\overline Y,
            \frac{1}{2}(|X|^2-|Y|^2)+
    \Re\left(XY-2\int Y\,dX\right)\right)\label{eq:ia-map}\\
     &= \left(X+\overline Y,
            \frac{1}{2}(|X|^2-|Y|^2)+
            \Re\left(XY+2Z\right)\right)\colon{}\D_1\to\R^3.
  \nonumber
 \end{align}
 Since $d\tau^2$ in \eqref{eq:d-tau} is complete,
 $f$ is weakly complete,
 by definition of weak completeness given in \cite{UY}.
 The boundedness of $f$ follows from that of $F$.
\end{proof}

\end{document}